\def\?[#1]{\textbf{[#1]}\marginpar{\Large{\textbf{??}}}}
\renewcommand{\tilde}{\widetilde}          
\DeclareMathSymbol{\leqslant}{\mathalpha}{AMSa}{"36} 
\DeclareMathSymbol{\geqslant}{\mathalpha}{AMSa}{"3E} 
\DeclareMathSymbol{\eset}{\mathalpha}{AMSb}{"3F}     
\renewcommand{\leq}{\;\leqslant\;}                   
\renewcommand{\geq}{\;\geqslant\;}                   
\renewcommand{\d}{\mathrm{d}}             
\numberwithin{equation}{section}
\newtheorem{theorem}{Theorem}[section]
\newtheorem{lemma}[theorem]{Lemma}
\theoremstyle{remark}
\theoremstyle{definition}
\newcommand{\C}{\mathbb{C}}
\newcommand{\D}{\mathbb{D}}
\newcommand{\R}{\mathbb{R}}
\newcommand{\Z}{\mathbb{Z}}
\renewcommand{\H}{\mathbb{H}}
\newcommand{\N}{\mathbb{N}}
\newcommand{\E}{\mathbb{E}}
\renewcommand{\P}{\mathbb{P}}
\renewcommand{\S}{\mathbb{S}}
\newcommand{\cE}{\mathcal{E}}
\newcommand{\cC}{\mathcal{C}}
\newcommand{\cH}{\mathcal{H}}
\newcommand{\cN}{\mathcal{N}}
\newcommand{\ind}{\mathds{1}}
\newcommand{\cD}{\mathcal{D}}
\newcommand{\laweq}{\overset{\text{law}}{=}}
\newcommand{\cI}{\mathcal{I}}
\newcommand{\norm}[1]{\left\Vert #1\right\Vert}
\author{Guillaume Baverez}
\thanks{Supported by the EPSRC grant EP/L016516/1 for the University of Cambridge CDT, the CCA}
\email{gb539@cam.ac.uk}
\address{Centre for Mathematical Sciences, Wilberforce Road, Cambridge CB30WA, UK}
\title[$\log$-regularity of SLE$_4$]{On the $\log$-regularity of SLE$_4$}
\begin{document}

\begin{abstract}
We prove that the welding homeomomorphism of SLE$_4$ is almost surely $\log$-regular. In a previous version of this work, we had erroneously deduced its removability from this property. Nevertheless, the $\log$-regularity does provide some information and could lead to future developments.
\end{abstract}

\maketitle
\section{Introduction}

	\subsection{Jordan curves}
	
		\subsubsection{Conformal welding and removability}
Let $\eta:\S^1\to\C$ be a Jordan curve, bounding two complementary Jordan domains $\Omega^+,\Omega^-\subset\hat{\C}$. Without loss of generality, we assume that $0\in\Omega^+$ and $\infty\in\Omega^-$. Let $\psi_+:\D^+\to\Omega^+$ (resp. $\psi_-:\D^-\to\Omega^-$) be a Riemann uniformising map fixing $0$ (resp. $\infty$), where $\D^+$ is the unit disc and $\D^-:=\hat{\C}\setminus\bar{\D}^+$. By Carath\'edory's conformal mapping theorem, $\psi_+,\psi_-$ extend continuously to homeomorphisms $\psi_\pm:\bar{\D}^\pm\to\bar{\Omega}^\pm$ and $h:=\psi_-^{-1}\circ\psi_+|_{\S^1}$ is a homeomorphism of the circle called the \emph{conformal welding homeomorphism} of $\eta$. It is well-known that the mapping $\eta\mapsto h$ is neither injective nor onto: namely, there exist distinct curves (viewed up to M\"obius transformations) with the same welding homeomorphism, and not every homeomorphism is the conformal welding of a Jordan curve. At the time of writing, no geometric characterisation of conformal welding homeomorphisms is available in the literature, see \cite{Bishop} for a comprehensive review.

For the curve to be unique, it is sufficient that it is conformally removable (note that the converse is unknown \cite{Younsi18}). Recall that a compact set $K\subset\C$ is \emph{conformally removable} if every homeomorphism of $\hat{\C}$ which is conformal off $K$ is a M\"obius transformation. From the point of view of complex geometry, this means that the conformal maps $\psi_+,\psi_-$ endow the topological sphere $\D^+\sqcup\D^-/\sim_h$ with a well-defined complex structure, where $\sim_h$ is the equivalence relation identifying $x\in\S^1=\partial\D^+$ with $h(x)\in\S^1=\partial\D^-$. Another notion of removability, introduced by Jones \cite{Jones91}, is the removability for (continuous) Sobolev functions, or $H^1$-removability. The set $K$ is \emph{$H^1$-removable} if any $f\in H^1(\C\setminus\eta,\d z)\cap\cC^0(\C)$ belongs to $H^1(\C,\d z)$, that is $H^1(\C\setminus\eta,\d z)\cap\cC^0(\C)=H^1(\C,\d z)\cap\cC^0(\C)$. Jones proved that $H^1$-removability implies conformal removability, but the converse is still an open question. 

	\subsubsection{$\log$-regularity}
The Hilbert space $H^{1/2}(\S^1,\d\theta)$ is the space of traces of $H^1(\D,\d z)$ on $\S^1=\partial\D$. Is a closed subspace of $L^2(\S^1,\d\theta)$ and endowed with the norm $\omega\mapsto\norm{\omega}_{L^2(\S^1,\d\theta)}^2+\norm{\omega}_\partial^2$, where the second term denotes the Dirichlet energy of the harmonic extension of $\omega$ to $\D$. Negligible sets for $H^{1/2}(\S^1,\d\theta)$ are called \emph{polar}, and they are those sets with zero logarithmic capacity. Recall from \cite[Section 3]{Bishop} that a Borel set $E\subset\S^1$ has positive logarithmic capacity if and only if there is a Borel probability measure $\nu$ on $\S^1$ giving full mass to $E$ and with finite logarithmic energy:
\begin{equation}\label{eq:log_energy}
\int_{\S^1\times\S^1}\log\frac{2}{|x-y|}\d\nu(x)\d\nu(y)<\infty.
\end{equation}
 
 Following \cite{Bishop}, we say that $h$ is \emph{$\log$-regular} if $h(E)$ and $h^{-1}(E)$ have zero Lebesgue measure for all polar sets $E\subset\S^1$. In other words, $h$ is $\log$-regular if the pullback measure $\mu:=h^*\d\theta$ does not charge any polar sets of $\S^1$ (and similarly for the pushforward measure), i.e. $\mu$ is a Revuz measure. From the theory of Dirichet forms (see e.g. \cite[Theorem 6.2.1]{Fukushima10}), we can define a Dirichlet form $(\cE,\cD)$ on $L^2(\S^1,\mu)$ with domain
\[\cD:=\{\omega\in L^2(\S^1,\mu),\,\exists\tilde{\omega}\in H^{1/2}(\S^1,\d\theta)\text{ s.t. }\tilde{\omega}=\omega\quad\mu\text{-a.e.}\},\]
 and the form $\cE$ is given unmabiguously by $\cE(\omega,\omega)=\norm{\tilde{\omega}}_\partial^2$. In other words, the injection $H^{1/2}(\S^1,\d\theta)\cap\cC^0(\S^1)\hookrightarrow L^2(\S^1,\mu)$ extends continuously and injectively to $H^{1/2}(\S^1,d\theta)$, and similarly for $H^{1/2}(\S^1,\d\theta)\circ h$ into $L^2(\S^1,\d\theta)$.

There are two natural measures supported on $\eta$: the harmonic measures viewed from 0 and $\infty$ respectively, which we denote by $\sigma_+$ and $\sigma_-$. We say that $\sigma_+$ (resp. $\sigma_-$) is the harmonic measure from the inside (resp. outside) of $\eta$. By conformal invariance, $\sigma_\pm$ is simply the pushforward under $\psi_\pm$ of the uniform measure on $\S^1$. Hence, we can understand the $\log$-regularity of $h$ as the statement that traces of $H^1(\Omega^+,\d z)$ form a closed subspace of $L^2(\eta,\sigma_-)$, and vice-versa. So we can initiate a comparison of these spaces of traces in either $L^2(\eta,\sigma_+)$ or $L^2(\eta,\sigma_-)$. For instance, one can introduce the operator $A:\,H^{1/2}(\S^1,\d\theta)^2\to L^2(\S^1,\d\theta),\,(\omega_+,\omega_-)\mapsto\omega_+-\omega_-\circ h$. This operator encodes the ``jump" accross $\eta$ of a function in $H^1(\C\setminus\eta,\d z)$ whose traces on each side of $\eta$ are given by $\omega_+\circ\psi_+^{-1}$ and $\omega_-\circ\psi_-^{-1}$. In particular, one can expect the kernel of $A$ to contain some information about removability.

	\subsection{Schramm-Loewner Evolution}
	
SLE was introduced by Schramm \cite{Schramm2000} as the conjectured (and now sometimes proved) scaling limit of interfaces of clusters of statistical mechanics models at criticality. These are random fractal curves joining boundary points of simply connected planar domains, characterised by their conformal invariance and domain Markov properties. To each $\kappa\geq0$ corresponds a probability measure SLE$_\kappa$, whose sample path properties depend heavily on the value of $\kappa$. The case $\kappa=0$ is deterministic and corresponds to the (hyperbolic) geodesic flow, while $\kappa>0$ describes random fluctuations around it. A phase transition occurs at $\kappa=4$: the curve is simple for $\kappa\in[0,4]$ but self- and boundary-intersecting for $\kappa\in(4,8)$ \cite[Section 6]{RohdeSchramm05}. Moreover, the Hausdorff dimension of the SLE$_\kappa$ trace a.s. equals $\min(1+\frac{\kappa}{8},2)$ \cite{RohdeSchramm05,Beffara08}.

 A few years after Schramm's groundbreaking paper, it was understood that SLE$_\kappa$ for $\kappa\leq4$ was the Jordan curve arising from the conformal welding of random surfaces according to their boundary length measure \cite{sheffield2016}, the latter being an instance of the ``Liouville measure" \cite{DuplantierSheffield11}. Although the construction of \cite{DuplantierSheffield11} was independent, the Liouville measure is a special case of the ``multiplicative chaos" measures pioneered by Kahane in the 80's \cite{Kahane85}. In \cite{sheffield2016}, Sheffield uses an \textit{a priori} coupling between SLE and the GFF and shows furthermore that the ``quantum lengths" measured from each side of the curve coincide, and correspond to the Liouville measure. This is the ``quantum zipper" theorem, which also states that slicing a random surface with an independent SLE produces two independent random surfaces. Berestycki's review \cite{Berestycki_lqggff} provides a gentle introduction to these topics and an abundance of complementary details. Subsequently, the quantum zipper was systematically used and generalised in the ``mating of trees" approach to Liouville quantum gravity \cite{MatingOfTrees}. The critical Liouville measure ($\kappa=4$) was not constructed when Sheffield's paper was released, but since then Holden \& Powell used recent techniques to extend the result to the critical case \cite{HoldenPowell2018}. 
 
 Another approach to the conformal welding of random surfaces is that of Astala, Kupiainen, Jones \& Smirnov \cite{astala2011}. They use standard complex analysis techniques to show the existence of the welding, but unfortunately the model they consider is not the one that produces SLE. We mention that Aru, Powell, Rohde \& Viklund and the author have ongoing (and independent) works aiming at a construction of SLE via conformal welding of multiplicative chaos without using the coupling with the GFF. We stress that this is not an easy problem since it falls outside of the scope of standard results from the theory of conformal welding.

 Since SLE arises as the interface between conformally welded random surfaces, it is crucial to know that it is conformally removable, as this implies that the complex structure induced on the welded surface is well-defined. It has been known since its introduction that SLE$_\kappa$ is conformally (and $H^1$-) removable for $\kappa<4$ as the boundary of a H\"older domain \cite[Theorem 5.2]{RohdeSchramm05}. However, the case $\kappa=4$ is special as it corresponds to the critical point of the multiplicative chaos measures. At the moment, the only positive result is the one of \cite[Theorem 1.1 \& Section 2]{McenteggartMillerQian}, saying that the only welding satisfying certain geometric conditions is SLE$_4$. On the other hand, it is known that SLE$_4$ is \emph{not} the boundary of a H\"older domain \cite[Section 1.3]{Gwynne18_spectrumSLE}. To our knowledge, it is unknown whether it satisfies the weaker condition on the modulus of continuity contained in \cite[Corollary 4]{JonesSmirnov00}. Motivated by the question of the removability of SLE$_4$ and the considerations of the previous subsection, it is natural to ask whether the welding homeomorphism is $\log$-regular, which we answer affirmatively.
 
\begin{theorem}\label{thm:removable}
Almost surely, the welding homeomorphism of SLE$_4$ is $\log$-regular.
\end{theorem}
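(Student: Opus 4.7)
The plan is to exploit the coupling of $\mathrm{SLE}_4$ with a critical Liouville quantum surface via the Sheffield--Holden-Powell quantum zipper. Sampling the curve $\eta$ jointly with an appropriate critical quantum wedge (or quantum disk), the critical Liouville boundary measure $\nu$ on $\eta$ is well-defined and agrees when constructed from either side. Pulling back by $\psi_\pm$ yields boundary measures $\nu_\pm := \psi_\pm^* \nu$ on $\S^1$, and the welding homeomorphism $h$ is then characterised by $h_* \nu_+ = \nu_-$. Unfolding definitions, the pullback $\mu := h^* \d\theta$ equals $\psi_+^* \sigma_-$, and symmetrically $(h^{-1})^* \d\theta = \psi_-^* \sigma_+$, so the theorem reduces to showing that neither of these two measures charges a polar subset of $\S^1$.

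The core analytic input is a moment estimate for the critical Liouville boundary measure $\nu_\pm$ on $\S^1$. Although the classical second-moment bound fails at criticality, small moments of order $q<1/2$ suffice for our purposes. Using the Seneta--Heyde approximation $\nu^\varepsilon$, I would bound $\mathbb{E}[\nu^\varepsilon(E)^q]$ uniformly in $\varepsilon$ by a positive power of the logarithmic capacity of $E$, then pass to the limit $\varepsilon \to 0$ and use a countable covering by dyadic polar approximations to conclude that a.s.\ $\nu_\pm(E)=0$ whenever $\Cap(E)=0$. The zipper identification $h_* \nu_+ = \nu_-$ should then transfer this $\log$-regularity from $\nu_\pm$ to $\mu$ and $(h^{-1})^* \d\theta$, after which the theorem follows.

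The main obstacle I anticipate is this final transfer step: establishing that polar-nullity of the critical Liouville measure $\nu_\pm$ implies the same for the Euclidean welding measures $\mu$ and $(h^{-1})^* \d\theta$. Since Lebesgue $\d\theta$ and $\nu_\pm$ are mutually singular on $\S^1$, one cannot compare them directly; a Cameron--Martin shift in the underlying GFF, biased by an $H^{1/2}$ test function supported near a given polar set, seems necessary to decouple the Euclidean and quantum parametrisations. The marginal behaviour of the Liouville weight at the critical value $\gamma=2$ will make this decoupling technically delicate, and one may need to work with a suitably regularised welding (freezing the zipper at a positive capacity scale) before taking the critical limit.
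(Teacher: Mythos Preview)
Your reduction via the quantum zipper is correct and is exactly how the paper passes from Theorem~\ref{thm:log_regular} (the GMC statement) to Theorem~\ref{thm:removable}. But the analytic core you propose does not close the gap, and the paper explicitly explains why.

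Your ``core analytic input'' is that the critical boundary Liouville measure $\nu_\pm$ does not charge polar sets. This is already known (it is the statement that critical GMC is a Revuz measure, cf.\ \cite{RhodesVargas15_criticalLBM}), and the paper points out in Section~\ref{subsec:preliminaries} that it is \emph{not sufficient}: there exist polar sets $E$ such that $h_+(E)$ has strictly positive Hausdorff dimension, so nothing prevents $h(E)=h_-^{-1}(h_+(E))$ from having positive Lebesgue measure. The zipper identity $h_*\nu_+=\nu_-$ only tells you that $h$ respects $\nu$-null sets; it says nothing about how $h$ moves mass between the Lebesgue and capacitary scales, which is precisely what $\log$-regularity asks. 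Your Cameron--Martin suggestion does not address this: a Girsanov shift changes the law of the field by an $H^{1/2}$ drift, but it cannot convert a statement about $\nu_\pm$-nullity into one about $|h(E)|$, because Lebesgue measure and $\nu_\pm$ live on mutually singular supports regardless of which absolutely continuous reweighting you apply.

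The paper's proof is of a completely different nature. It fixes a gauge $f_k(u)=(\log\tfrac{1}{u})^{-k}$ and splits any polar set $E$ into $E\cap E^{f_k}$ (where $\mu_+$ is exceptionally large) and $E\setminus E^{f_k}$. On the second piece, $h_+$ has a quantitative modulus of continuity, so $\dim h_+(E\setminus E^{f_k})\leq 1/k$, and the elementary $\tfrac14$-H\"older regularity of $h_-^{-1}$ (Lemma~\ref{lem:holder_inverse_homeo}) gives $|h(E\setminus E^{f_k})|=0$ once $k>4$. The first piece is the hard one: Lemma~\ref{lem:dim_Ef} bounds $\dim h_+(E^{f_k})\leq 1-\tfrac{1}{2k}$ via a Seneta--Heyde moment computation, Lemma~\ref{lem:support_inverse_homeo} carries out a multifractal analysis of $h_-^{-1}$ to isolate the set $\tilde E_{1/2}$ of local H\"older exponent $\tfrac12$ (whose complement is mapped to a Lebesgue-null set), and Lemma~\ref{lem:dim_intersection} exploits the \emph{independence} of $\mu_+$ and $\mu_-$ to show that the intersection $h_+(E^{f_k})\cap\tilde E_{1/2}$ has dimension at most $\tfrac12-\tfrac{1}{2k}<\tfrac12$, so its image under $h_-^{-1}$ is Lebesgue-null. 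None of this is visible from the Revuz property alone; the argument genuinely requires the fine multifractal structure of both $h_+$ and $h_-^{-1}$.
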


This will be proved in Section \ref{subsec:proof}, while Section \ref{sec:log_regular} gives the necessary background.

\textbf{Acknowledgements:} We are grateful to N. Berestycki, C. Bishop, J. Miller, E. Powell and M. Younsi for discussions and comments. We especially thank C. Bishop for pointing out the mistake in the original document and N. Berestycki for motivation and support.

%

\section{Background}\label{sec:log_regular}

	\subsection{Gaussian Multiplicative Chaos}\label{subsec:gmc}
Let $X$ be a centred Gaussian field in the unit interval with covariance
\begin{equation}\label{eq:correl_exact}
\E[X(x)X(y)]=2\log\frac{1}{|x-y|},\qquad\qquad x,y\in(0,1).
\end{equation}
Such a process is \textit{a priori} ill-defined because of the logarithmic divergence on the diagonal, but it can be realised as (the restriction to $(0,1)$ of) the trace on $\R$ of the Gaussian Free Field (GFF) in $\H$ with free boundary conditions. With this procedure, we get a distribution in $(0,1)$ which almost surely belongs to $H^{-s}(0,1)$ for all $s>0$.

\emph{Gaussian Multiplicative Chaos} with parameter $\gamma\in(0,2)$ is the random measure $\mu_\gamma$ on $\cI:=[0,1]$ obtained as the weak limit in probability as $\varepsilon\to0$ of the family of measures
\[\d\mu_{\gamma,\varepsilon}(x):=e^{\frac{\gamma}{2}X_\varepsilon(x)-\frac{\gamma^2}{8}\E[X_\varepsilon^2(x)]}\d x,\]
where $(X_\varepsilon)_{\varepsilon>0}$ is a suitable regularisation of $X$ at scale $\varepsilon$ \cite{Berestycki17}. This measure is defined only up to multiplicative constant (since the GFF is only defined up to additive constant), but we can fix the constant by requiring it to be a probability measure (this also fixes the constant of the GFF). The point $\gamma=2$ is critical and the renormalisation procedure above converges to 0 as $\varepsilon\to0$, but there are several (equivalent) renormalisations that give a non-trivial limit $\mu_2$, e.g.
\begin{equation}\label{eq:renormalise}
\d\mu_{2,\varepsilon}(x):=\sqrt{\log\frac{1}{\varepsilon}}e^{X_\varepsilon(x)-\frac{1}{2}\E[X_\varepsilon^2(x)]}\d x.
\end{equation}
Here, the (deterministic) diverging factor $\sqrt{\log\frac{1}{\varepsilon}}$ compensates for the decay to zero mentioned above. The topology of convergence is the same as in the subcritical case. This renormalisation was considered in \cite{DRSV14} (the so-called ``Seneta-Heyde" renormalisation), but the critical measure can also be obtained by the ``derivative martingale" approach \cite{DRSV14_mart} or as a suitable limit of subcritical measures \cite{APS19}. It is a fact of importance that the limiting measure is universal in the sense that it essentially does not depend on the choice of renormalisation or regularisation of the field \cite{JunnilaSkasman17_uniqueness,Powell18_critical}. For concreteness, we will assume the regularisation $(X_\varepsilon)_{\varepsilon>0}$ of \cite{Barral15}:
\begin{equation}\label{eq:regularised_field}
\frac{1}{2}\E[X_\varepsilon(x)X_\varepsilon(y)]=\left\lbrace
\begin{aligned}
&\log\frac{1}{|x-y|}\qquad&\text{ if }\varepsilon\leq|x-y|\leq1\\
&\log\frac{1}{\varepsilon}+1-\frac{|x-y|}{\varepsilon}\qquad&\text{ if }|x-y|<\varepsilon.
\end{aligned}\right.
\end{equation}

Because of the exact logarithmic form of the covariance \eqref{eq:correl_exact}, the measures $\mu_\gamma$ (for $\gamma\in[0,2]$) satisfy an exact scale invariance property \cite[Appendix A.1]{Barral15}. In particular, for any interval $I\subset\cI$, the restriction $\mu_\gamma|_I$ of $\mu_\gamma$ to $I$ satisfies:
\begin{equation}\label{eq:exact_scale}
\mu_\gamma|_I\laweq|I|e^{\frac{\gamma}{2}X_I-\frac{\gamma^2}{8}\E[X_I^2]}\mu_\gamma^I,
\end{equation}
where $X_I\sim\cN(0,2\log\frac{1}{|I|})$ and $\mu_\gamma^I$ is an independent measure with law $\mu_\gamma^I(\cdot)\laweq\mu_\gamma(|I|^{-1}\,\cdot)$.

For the reader's convenience, we recall some basic properties of these measures, highlighting the pathologies arising at the critical point. The behaviour of $\mu_\gamma$ gets wilder as $\gamma$ increases: almost surely, it gives full mass to a set of Hausdorff dimension $1-\frac{\gamma^2}{4}$, consisting of those points where $X$ is exceptionally large. In the critical case, $\mu_2$ gives full mass to a set of Hausdorff dimension 0, corresponding to the ``maximum" of $X$. This set is still large enough for $\mu_2$ to be non-atomic, see also \cite[Theorem 2]{Barral15} for bounds on the modulus of continuity of $\mu_2$. 

As a result, the distribution of $\mu_\gamma(\cI)$ has a heavy tail near $\infty$, so that positive moments $\E[\mu_\gamma(\cI)^p]$ are finite if and only if $p<\frac{4}{\gamma^2}$. In particular, $\mu_2(\cI)$ does not have a finite expected value, see also \cite[Theorem 1]{Barral15} for precise tail asymptotics. On the other hand, the tail of $\mu_\gamma(\cI)$ at $0^+$ is nice, and $\E[\mu_\gamma(\cI)^p]<\infty$ for all $p<0$ and $\gamma\in[0,2]$.

Let $\mu_+^\gamma,\mu_-^\gamma$ be independent GMCs on $\cI$ with parameter $\gamma\in[0,2]$. Since a.s. $\mu_\pm^\gamma$ is non-atomic and $\mu_\pm^\gamma(\cI)<\infty$, we can define homeomorphisms $h_\pm$ of $\cI$ by $h_\pm^\gamma(x):=\mu^\gamma_\pm[0,x]$. We also set $h:=h_-^{-1}\circ h_+$. For $\gamma<2$, $h_\pm$ and $h^{-1}_\pm$ are a.s. H\"older continuous \cite[Theorem 3.7]{astala2011}, thus so are $h$ and $h^{-1}$ and in particular they preserve polar sets. Hence $h$ is $\log$-regular in the subcritical case. This property is far from clear in the critical case since $h_+$ and $h_-$ are a.s. \emph{not} H\"older continuous. The main result of this section, which is proved in Section \ref{subsec:proof}, is the following theorem.
\begin{theorem}\label{thm:log_regular}
For $\gamma=2$, $h$ is almost surely $\log$-regular.
\end{theorem}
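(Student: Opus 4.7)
My plan is to show that the pullback measure $\nu := h^* d\theta$ on $\cI$, defined by $\nu(A) = \mathrm{Leb}(h(A))$, almost surely has finite logarithmic energy
\[
I(\nu) := \int_\cI\int_\cI \log \frac{1}{|x-y|}\,d\nu(x)\,d\nu(y) < \infty.
\]
Since any probability measure supported on a polar set has infinite logarithmic energy by the characterisation recalled in \eqref{eq:log_energy}, $I(\nu) < \infty$ forces $\nu(E) = 0$ for every polar $E$, which by definition of $\nu$ means $\mathrm{Leb}(h(E)) = 0$; the analogous statement for $h^{-1}$ follows from the $\mu_+\leftrightarrow \mu_-$ symmetry.

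By the change of variables $u = h(x), v = h(y)$,
\[
I(\nu) = \int_0^1\!\int_0^1 \log \frac{1}{|h^{-1}(u) - h^{-1}(v)|}\,du\,dv,
\]
and writing $h^{-1} = h_+^{-1}\circ h_-$, the integrand for $u<v$ is $\log(1/L(u,v))$, where $L(u,v)$ is the Lebesgue length of the interval in $\cI$ with left endpoint $h_+^{-1}(h_-(u))$ and $\mu_+$-mass $\alpha(u,v) := \mu_-([u,v])$. I would then aim to prove the pointwise expectation bound
\[
\E\!\left[\log\frac{1}{L(u,v)}\right] \lesssim \log\frac{1}{|v-u|} + C,
\]
which, integrated over $\cI\times\cI$, yields $\E[I(\nu)] < \infty$ and hence $I(\nu)<\infty$ a.s.

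To obtain the estimate above I would proceed in two stages. First, condition on $\mu_-$ so that $\alpha$ becomes deterministic, and show that $\E_{\mu_+}[\log(1/L(u,v)) \mid \mu_-] \lesssim \log(1/\alpha) + C$ uniformly in the location of the left endpoint. Second, take the $\mu_-$-expectation: by the exact scale invariance \eqref{eq:exact_scale}, $\mu_-([u,v]) \laweq |v-u|^2 e^{X}\,M$ with $X\sim\cN(0,2\log(1/|v-u|))$ and $M\laweq\mu_-(\cI)$ independent. Since the tail $\P(M>T)\sim C/T$ from \cite[Theorem 1]{Barral15} is integrable against $dr = d\log T$ at $+\infty$ and all negative moments of $M$ are finite, one has $\E|\log M|<\infty$, whence $\E[\log(1/\alpha)] = 2\log(1/|v-u|)+O(1)$. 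Composing the two estimates gives the claim.

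The main obstacle lies in the conditional bound of the first stage. Because the left endpoint $h_+^{-1}(h_-(u))$ depends on $\mu_+$, one cannot apply scale invariance of $\mu_+$ directly to a fixed interval. The pointwise estimate $\P(\mu_+([0,\delta])>\alpha) \lesssim \delta/\alpha$, which follows from \eqref{eq:exact_scale} combined with the $1/T$-tail of the total mass, is tight; but a union bound over the $\sim 1/\delta$ dyadic $\delta$-intervals covering $\cI$ loses exactly the $\delta$-gain. A multiscale or iterative argument, exploiting near-independence of the field across separated scales, is likely needed to control $\P(L(u,v) \leq \delta \mid \mu_-)$ sharply enough to recover the desired logarithmic bound.
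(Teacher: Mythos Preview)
Your proposal is not a proof: you explicitly leave the decisive estimate of ``stage~1'' open, conceding that the naive union bound loses exactly the needed factor and that some unspecified ``multiscale or iterative argument'' is ``likely needed''. Without that step nothing is established. The difficulty you identify is real: for fixed $a\in(0,1)$ the basepoint $h_+^{-1}(a)$ is a $\mu_+$-measurable random point, so exact scale invariance \eqref{eq:exact_scale} cannot be applied around it, and the claimed bound $\E_{\mu_+}\!\left[\log(1/L)\mid\mu_-\right]\lesssim\log(1/\alpha)$ uniformly in $a$ is a genuine statement about the quantile function of critical GMC that you do not prove. Note also that you are aiming at a strictly stronger conclusion than required: finiteness of the logarithmic energy of $\nu=h^*\d\theta$ implies, but is not equivalent to, the statement that $\nu$ does not charge polar sets; you give no independent evidence that $I(\nu)<\infty$ almost surely.

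The paper's route is entirely different and never attempts to bound $I(\nu)$. It fixes $k>4$ and splits $\cI$ into the ``bad'' set $E^{f_k}$ (points $x$ with $\mu_+(I_n(x))\geq(\log 2^n)^{-k}$ for infinitely many $n$) and its complement. On $\cI\setminus E^{f_k}$, an elementary energy comparison shows that $h_+$ sends every polar set to a set of Hausdorff dimension at most $\tfrac{1}{k}$; since $h_-^{-1}$ is a.s.\ $\alpha$-H\"older for all $\alpha<\tfrac{1}{4}$ (Lemma~\ref{lem:holder_inverse_homeo}), the image under $h$ has dimension at most $\tfrac{4}{k}<1$. On $E^{f_k}$, the paper proves $\dim h_+(E^{f_k})\leq 1-\tfrac{1}{2k}$ (Lemma~\ref{lem:dim_Ef}) and then carries out a multifractal analysis of $h_-^{-1}$ (Lemmas~\ref{lem:support_inverse_homeo} and~\ref{lem:dim_intersection}) to obtain $|h(E^{f_k})|=0$. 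At no stage does one need to control $h_+^{-1}$ near a $\mu_+$-dependent basepoint, which is precisely where your scheme stalls.
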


	\subsection{Applications to SLE$_4$ and related models}\label{subsec:applications}
Consider two independent critical GMC measures on $\R$ obtained by exponentiating the trace of the GFF in $\H$. Denote by $h:\R\to\R$ the associated welding homeomorphism. Since countable unions of sets of zero Lebesgue measure have zero Lebesgue measure, we see that $h$ is a.s. $\log$-regular by taking large intervals and applying Theorem \ref{thm:log_regular}.

The construction of SLE using conformal welding is closely related to the above setup and is based on Sheffield's so-called ``$(\gamma,\alpha)$-quantum wedges" \cite[Section 1.6]{sheffield2016}, see also \cite[Section 5.5]{Berestycki_lqggff} and \cite[Section 2.2]{HoldenPowell2018} for the critical case. A quantum wedge is essentially a suitably normalised GFF in $\H$ with free boundary conditions and an extra logarithmic singularity at the origin (parametrised by $\alpha)$. One considers two independent $(\gamma,\gamma)$-quantum wedges and there respective boundary Liouville measure $\mu^\gamma_+,\mu^\gamma_-$ on $\R$, and constructs the homeomorphism $h:\R_+\to\R_-$ characterised by $\mu^\gamma_+[0,x]=\mu^\gamma_-[h(x),0]$ for all $x\in\R_+$. For $\kappa=\gamma^2\in(0,4)$, \cite{sheffield2016} proves that solving the conformal welding problem for this model produces an SLE$_{\kappa=\gamma^2}$ on top of an independent $(\gamma,\gamma-\frac{2}{\gamma})$-quantum wedge. \cite[Theorem 1.2]{HoldenPowell2018} extends this result to $\kappa=\gamma^2=4$. The extra $\log$-singularity at the origin amounts in conditioning the origin to be a typical point of the Liouville measure, so it does not change any capacity properties of the homeomorphism. Thus, Theorem \ref{thm:log_regular} implies that the welding homeomorphism of SLE$_4$ is $\log$-regular, from which Theorem \ref{thm:removable} follows.

Finally, by M\"obius invariance, we get similar statements in the disc model. Namely, let $\mu_+,\mu_-$ be critical GMC measures on $\S^1\simeq\R/\Z$ (normalised to be probability measures), obtained by exponentiating the trace on $\S^1$ of two independent free boundary GFFs in $\D$. Let $h:\S^1\to\S^1$ be the associated welding homeomorphism, i.e. $\mu_+[0,\theta]=\mu_-[0,h(\theta)]$ for all $\theta\in\R/\Z$. Theorem \ref{thm:log_regular} implies that $h$ is almost surely $\log$-regular.

	\subsection{Preliminaries}\label{subsec:preliminaries}
Recall the setup: $\mu_+$ and $\mu_-$ are independent critical GMC measures on $\cI$ as defined in Section \ref{subsec:gmc}, $h_\pm(x)=\frac{\mu_\pm[0,x]}{\mu_\pm(\cI)}$ and $h=h_-^{-1}\circ h_+$. By symmetry, to show the $\log$-regularity of $h$, it suffices to prove that $|h(E)|=0$ for all polar sets $E\subset\cI$.

It is known that $\mu_+$ is a.s. a Revuz measure \cite[Section 4]{RhodesVargas15_criticalLBM} but this is not sufficient to establish that $|h(E)|=0$ for all $E$ polar. Indeed, it could happen (and it actually does) that there exists some polar set $E$ such that $h_+(E)$ has positive Hausdorff dimension, and then nothing could provide \textit{a priori} $h(E)=h^{-1}_-(h_+(E))$ from having positive Lebesgue measure. To prove Theorem \ref{thm:log_regular}, we will have to analyse better the properties of $h_-^{-1}$ and the sets where $\mu_+$ is exceptionally large.
 
For each $n\in\N$, we let 
\[\cD_n:=\left\lbrace[k2^{-n},(k+1)2^{-n}),\,k=0,...,2^n-1\right\rbrace\]
 be the set of all dyadic intervals of length $2^{-n}$, and for $x\in\cI$, $I_n(x)\in\cD_n$ is the dyadic interval containing $x$. A \emph{gauge function} is a non-decreasing function $f:[0,1)\to\R_+$ such that $f(0)=0$. Given such a function, we introduce the set
\[E_n^f:=\{I\in\cD_n|\,\mu(I)\geq f(|I|)\}\]
and $E^f:=\underset{n\to\infty}{\limsup}\,E_n^f$. For $\alpha\geq0$, we denote by $\cH_\alpha$ the $\alpha$-Hausdorff measure, i.e.
\[\cH_\alpha(E)=\underset{\delta\to0}{\lim}\inf\sum_i|I_i|^\alpha,\]
where for a given $\delta>0$ the infimum runs over all coverings of $E$ by countable collections of open intervals $(I_i)$ with Lebesgue measure $|I_i|\leq\delta$. We denote by $\dim E$ the Hausdorff dimension of a set $E\subset\cI$, i.e. $\dim E=\sup\{\alpha\geq0\text{ s.t. }\cH_\alpha(E)=\infty\}=\inf\{\alpha\geq0\text{ s.t. }\cH_\alpha(E)=0\}$.

 In \cite{Barral15}, the authors show that $\mu_+$ gives full mass to a set of Hausdorff dimension zero, i.e. they find a gauge function $f$ such that $\cH_\alpha(E^f)=0$ for all $\alpha>0$ (i.e. $\dim E^f=0$) and $\cH_1(h_+(\cI\setminus E))=0$ (Theorem 4 \& Corollary 24). On the other hand, they give a bound on the modulus of continuity of $h_+$ (Theorem 2), i.e. they find $f$ such that $E^f=\emptyset$. Such $f$'s are given by $f(u)=C(\log\frac{1}{u})^{-k}$ for $k\in(0,\frac{1}{2})$ and some (random) $C>0$. To prove Theorem \ref{thm:log_regular}, we need to investigate in more detail the multifractal properties of $h_-^{-1}$ and the behaviour of $h_+$ on $E^{f_k}$ for $k>\frac{1}{2}$, where here and in the sequel,
 \[f_k(u):=\left(\log\frac{1}{u}\right)^{-k}.\] 

	\section{Proof of Theorem \ref{thm:log_regular}}\label{subsec:proof}

 		\subsection{Bounding the image of $E^{f_k}$}\label{subsubsec:bound_hE}
 The next lemma is a refinement of \cite[Theorem 19 (3)]{Barral14} and its proof follows approximately the same lines.
 
\begin{lemma}\label{lem:dim_Ef}
Fix $k>\frac{1}{2}$. Almost surely, $\dim h_+(E^{f_k})\leq1-\frac{1}{2k}$. 
\end{lemma}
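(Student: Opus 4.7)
The plan is to establish, for every $s > 1 - \frac{1}{2k}$, that $\cH_s(h_+(E^{f_k})) = 0$ almost surely; letting $s \downarrow 1 - \frac{1}{2k}$ along a countable sequence then yields the claim on $\dim h_+(E^{f_k})$. The natural cover of the $\limsup$ set $h_+(E^{f_k})$ is, for every $N \geq 1$,
\[
h_+(E^{f_k}) \subseteq \bigcup_{n \geq N} \bigcup_{I \in E_n^{f_k}} h_+(I),
\]
where each $|h_+(I)| = \mu_+(I)/\mu_+(\cI)$. The almost-sure modulus of continuity of $\mu_+$ (\cite[Theorem 2]{Barral15}), namely $\mu_+(I) \leq C(\log 1/|I|)^{-k'}$ uniformly for any $k' < \frac{1}{2}$, guarantees that the mesh of this cover tends to zero as $N \to \infty$, so the cover is admissible for computing the $s$-Hausdorff content.

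By a standard Borel-Cantelli argument, it suffices to establish the almost sure summability
\[
\sum_{n \geq 1}\sum_{I \in \cD_n}\ind_{I \in E_n^{f_k}}\left(\frac{\mu_+(I)}{\mu_+(\cI)}\right)^s < \infty,
\]
which I attack via a first-moment estimate. The denominator $\mu_+(\cI)^{-s}$ can be isolated using that critical GMC has finite negative moments of all orders, combined with the decomposition $\mu_+(\cI) = \mu_+(I) + \mu_+(\cI \setminus I)$ and the (asymptotic, at small scales) near-independence of $\mu_+(I)$ and $\mu_+(\cI \setminus I)$; this reduces matters to $\E[\mu_+(I)^s \ind_{\mu_+(I) \geq c_n}]$ with $c_n := f_k(2^{-n}) \asymp n^{-k}$. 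For the latter, I would apply the exact scaling \eqref{eq:exact_scale}: for $|I| = 2^{-n}$, $\mu_+(I) \laweq 2^{-2n}e^{X_I}Y$ with $X_I \sim \cN(0, 2n\log 2)$ and $Y$ an independent copy of $\mu_+(\cI)$. A Chernoff bound on the event $\{X_I \geq 2n\log 2 + \log(c_n/Y)\}$, optimised over the Lagrange parameter, yields exponential decay in $n$ whose rate, multiplied by the $2^n$ cardinality of $\cD_n$, produces summability precisely at the threshold $s = 1 - \frac{1}{2k}$.

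The hardest part will be carrying out the Gaussian estimate sharply enough to extract the exponent $1 - \frac{1}{2k}$: this reflects a delicate balance between two contributions to $\{\mu_+(I) \geq c_n\}$, namely the Gaussian large-deviation of $X_I$ (with Mills-type polynomial prefactor) and the heavy tail of $Y \laweq \mu_+(\cI)$, which carries a $\sqrt{\log}$-correction specific to critical GMC. A secondary subtlety is that \eqref{eq:exact_scale} furnishes only the marginal law of $\mu_+(I)$, whereas the joint law with $\mu_+(\cI)$ is dictated by the log-correlations of the underlying Gaussian field; handling this correctly, whether via a Cameron-Martin shift or via conditioning on the long-wavelength component of $X$ at scale $2^{-n}$, is the technical heart of the argument and is where the proof refines that of \cite[Theorem 19(3)]{Barral14}.
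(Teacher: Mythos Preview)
Your plan has a genuine gap: the first-moment estimate cannot close. To see this cleanly, run your own Chernoff/Markov step. For any $\lambda>0$ with $s+\lambda<1$,
\[
\E\bigl[\mu_+(I)^s\ind_{\mu_+(I)\geq c_n}\bigr]\leq c_n^{-\lambda}\,\E\bigl[\mu_+(I)^{s+\lambda}\bigr],
\]
and by the exact scaling \eqref{eq:exact_scale} with $|I|=2^{-n}$ and $Y\laweq\mu_+(\cI)$,
\[
\E\bigl[\mu_+(I)^{s+\lambda}\bigr]=2^{-2n(s+\lambda)}\,\E\bigl[e^{(s+\lambda)X_I}\bigr]\E[Y^{s+\lambda}]=2^{-n\,(2(s+\lambda)-(s+\lambda)^2)}\E[Y^{s+\lambda}].
\]
Summing over the $2^n$ intervals in $\cD_n$ therefore gives, up to the factor $\E[Y^{s+\lambda}]$,
\[
2^{\,n(1-(s+\lambda))^2}\,n^{k\lambda}.
\]
The exponential prefactor is $\geq 1$ for every admissible choice of $\lambda$, and the only way to make it equal to $1$ is $s+\lambda=1$, at which point $\E[Y^{s+\lambda}]=\E[\mu_+(\cI)]=\infty$ in the critical case. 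Hence no choice of the Lagrange parameter produces a summable bound; the threshold $1-\tfrac{1}{2k}$ simply does not appear. The same obstruction persists if you replace Chernoff by the exact Gaussian tail and integrate over $Y$: the contribution from moderate $Y$ carries a factor $\E[Y\ind_{Y\leq M}]\sim C\log M$ (the barely-infinite first moment), and after multiplying by $2^n$ you are left with a polynomially growing quantity in $n$.

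The underlying reason is structural: the variables $(\mu_+(I))_{I\in\cD_n}$ share the coarse field $Y_n$, so the sum $\sum_I\mu_+(I)^s\ind_{\{\cdot\}}$ has a heavy tail and its expectation vastly overestimates its typical size. The paper deals with this in two moves you are missing. First, it stratifies $E^{f_k}$ by intersecting with $G^{f_{k-\varepsilon}}$, so that on each stratum $|h_+(I)|$ is also bounded \emph{above} by $f_{k-\varepsilon}(|I|)$; this converts the Hausdorff sum into $n^{-\theta}\sum_I(\sqrt{n}\,\mu_+(I))^\beta$ for suitable $\beta>1$ and $\theta>0$. Second, it controls $S_n:=\sum_I(\sqrt{n}\,\mu_+(I))^\beta$ not by its mean (which is infinite) but by a \emph{fractional} moment $\E[S_n^q]$ with $q<1/\beta$, computed via the identity $\E[S_n^q]=\tfrac{\Gamma(1-q)}{q}\int_0^\infty\lambda^{-q-1}(1-\E[e^{-\lambda S_n}])\,\d\lambda$ and the conditional independence of $(\mu_+(I))_{I\in\cD_n^e}$ given $Y_n$. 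This is precisely what tames the heavy tail and yields a uniform-in-$n$ bound. Your proposal correctly identifies the heavy tail of $Y$ as the hardest point, but treats it as a sharpness issue rather than the fatal obstruction to the first-moment route that it is.
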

\begin{proof}
Let $\alpha\in(1-\frac{1}{2k},1)$. Fix $1<\beta<\frac{\alpha}{1-\frac{1}{2k}}$ and $0<\varepsilon<\frac{1}{\alpha}(\alpha k-(k-\frac{1}{2})\beta)$. 
This choice of parameters is well-defined and ensures that $\theta:=\alpha k-(k-\frac{1}{2})\beta-\alpha\varepsilon>0$. Also, denote $G^{f_k}:=\underset{n\to\infty}{\liminf}\,(\cI\setminus E^{f_k}_n)$. This is the set of points $x\in\cI$ such that $|h_+(I_n(x))|\leq f_k(x)$ for all sufficiently large $n$. We first aim at showing that $\cH_\alpha(h_+(E^{f_k}\cap G^{f_{k-\varepsilon}}))<\infty$. We have
\begin{equation}\label{eq:hausdorff}
\begin{aligned}
\sum_{I\in\cD_n}|h_+(I)|^\alpha\ind_{\{f_k(|I|)<|h_+(I)|\leq f_{k-\varepsilon}(|I|)\}}
&\leq n^{-\alpha(k-\varepsilon)}\sum_{I\in\cD_n}\ind_{\{f_k(|I|)<|h_+(I)|\leq f_{k-\varepsilon}(|I|)\}}\\
&\leq n^{-\alpha(k-\varepsilon)}\sum_{I\in\cD_n}\left(\frac{|h_+(I)|}{f_k(|I|)}\right)^\beta\\
&=n^{-\theta}\sum_{I\in\cD_n}\left(n^{1/2}|h_+(I)|\right)^\beta.
\end{aligned}
\end{equation}
We need to get a hold on the tail of this last random variable. This is already known for cascades \cite[Lemma 18]{Barral14} and the proof in the case of GMC is a variation of the proof of \cite[Theorem 2]{Barral15} so we will be brief. In the sequel, $X$ denotes the field \eqref{eq:correl_exact} on $(0,1)$ and $\mu=\mu_+$ the associated critical GMC measure. Let $X_{|I|}$ be the regularised field \eqref{eq:regularised_field} and $Y_{|I|}:=X_{|I|}-X_1$. Let $\cD_n^e\subset\cD_n$ be the collection of even intervals, i.e. intervals of the form $[2j2^{-n},(2j+1)2^{-n})$. We have \cite[Equation (28)]{Barral15}
\[\left(\mu(I)\right)_{I\in\cD_n^e}\laweq\left(|I|\int_Ie^{Y_{|I|}-\frac{1}{2}\E[Y_{|I|}^2]}\d\mu_{|I|}\right)_{I\in\cD_n^e},\]
where $\mu_{|I|}$ is independent of $Y_{|I|}$ and the restrictions $(\mu_{2^{-n}}|_I)_{I\in\cD_n^e}$ form a colletion of independent measures. To ease notations, we relabel $Y_{2^{-n}}$ by $Y_n$ and $\mu_{2^{-n}}$ by $\mu_n$. 

Fix $q\in(0,\beta^{-1})$ and denote $S_n:=\sum_{n\in\cD_n^e}(\sqrt{n}\mu(I))^\beta$. It will suffice to get a uniform bound on $\E[S_n^q]$. This will be very similar to step 2 of the proof of \cite[Theorem 2]{Barral15}. First, we rewrite
\begin{equation}\label{eq:gamma}
\E\left[S_n^q\right]=\frac{\Gamma(1-q)}{q}\int_0^\infty\lambda^{-q}\left(1-\E\left[e^{-\lambda S_n}\right]\right)\frac{\d\lambda}{\lambda}.
\end{equation}

Conditionally on $Y_n$, the random variables $(\mu(I))_{I\in\cD_n^e}$ are independent. Moreover, the analysis of \cite{Barral15} shows that 
\begin{equation}\label{eq:prob_mu_large}
\P\left(\mu(I)\geq t|\,Y_n\right)\leq\frac{\cC Z_I}{t},
\end{equation}
 where $Z_I:=\int_Ie^{Y_n-\frac{1}{2}\E[Y_n^2]}\d x$ and $\cC$ is a random variable encapsulating the error. To control this error, \cite{Barral15} conditions on the event that it is not too large and bounds the probability of the complement. We refer the reader to step 3 of their proof for details and assume for now on that $\cC$ is bounded. Using the formula $1-\E[e^{-\lambda X}]=\int_0^\infty\lambda e^{-\lambda x}\P(X\geq x)\d x$, valid for all non-negative random variables $X$, \eqref{eq:prob_mu_large} yields for each $I\in\cD_n^e$:
\begin{align*}
1-\E\left[\left.\exp\left(-\lambda\left(\sqrt{n}\mu(I)\right)^\beta\right)\right|\,Y_n\right]
&=\int_0^\infty\lambda e^{-\lambda t}\P\left(\left.(\sqrt{n}\mu(I))^\beta\geq t\right|\,Y_n\right)\d t\\
&\leq\cC\sqrt{n}Z_I\int_0^\infty\lambda e^{-\lambda t}t^{-1/\beta}\d t=\tilde{\cC}\lambda^{1/\beta}\sqrt{n}Z_I.
\end{align*}
where $\tilde{\cC}:=\Gamma(1-\frac{1}{\beta})\cC$. Thus, by the independence of the measures $(\mu_n|_I)_{I\in\cD_n^e}$ and the inequality $e^{-2x}\leq 1-x$ (valid for $x\in[0,\frac{1}{2}]$), we get for sufficiently small $\lambda>0$
\begin{equation}\label{eq:laplace}\begin{aligned}
1-\E\left[e^{-\lambda S_n}\right]
&=1-\E\left[\prod_{I\in\cD_n^e}\E\left[\left.\exp\left(-\lambda(\sqrt{n}\mu(I))^\beta\right)\right|\,Y_n\right]\right]\\
&\leq1-\E\left[\prod_{I\in\cD_n^e}(1-\tilde{\cC}\lambda^{1/\beta}\sqrt{n}Z_I)\right]\\
&\leq1-\E\left[\exp\left(-2\sum_{I\in\cD_n^e}\tilde{\cC}\lambda^{1/\beta}\sqrt{n}Z_I\right)\right]\\
&\leq1-\E\left[\exp\left(-2\tilde{\cC}\lambda^{1/\beta}\sqrt{n}\int_0^1e^{Y_n-\frac{1}{2}\E[Y_n^2]}\d x\right)\right]
\end{aligned}
\end{equation}
The last term is the Laplace transform of the Seneta-Heyde renormalised measure \eqref{eq:renormalise}, so we can expect to get a uniform bound in $n$. Indeed, from step 4 of the proof of \cite[Theorem 2]{Barral15}, given $\varepsilon\in(0,1-\beta q)$, the last line of \eqref{eq:laplace} is bounded by $C_\varepsilon\lambda^{\frac{1-\varepsilon}{\beta}}$ for some $C_\varepsilon>0$ independent of $n$. Thus, for small $\lambda>0$ we obtain $1-\E\left[e^{-\lambda S_n}\right]\leq C_\varepsilon\lambda^{\frac{1-\varepsilon}{\beta}}$.
Hence, the integrand in the RHS of \eqref{eq:gamma} is $O(\lambda^{\frac{1-\varepsilon}{\beta}-q-1})$ as $\lambda\to0^+$, which is integrable since $\varepsilon<1-\beta q$. This proves that $\E[S_n^q]$ is uniformly bounded as $n\to\infty$. By Markov's inequality and the fact that the law of GMC is the same on even and odd intervals, we get:
\[\P\left(\sum_{I\in\cD_n}\left(\sqrt{n}\mu(I)\right)^\beta\geq n^\frac{\theta}{2}\right)\leq 2\P\left(2S_n\geq n^\frac{\theta}{2}\right)\leq2^{1+q}\E[S_n^q]n^{-\theta q/2},\] 
so the Borel-Cantelli lemma implies that there exists an integer $\ell>\frac{2}{\theta}$ such that almost surely for all $n$ sufficiently large:
\begin{equation}\label{eq:bound_seneta_heyde}
\sum_{I\in\cD_{n^\ell}}\left(n^{\ell/2}\mu(I)\right)^\beta\leq n^{-\frac{\ell\theta}{2}}.
\end{equation}

By definition, for each $N\in\N$, the set $h_+(\cup_{n\geq N}E^{f_k}_n)$ provides a covering of $h_+(E^{f_k})$. Moreover, given $j^\ell\leq n<(j+1)^\ell$ and $x\in\cI$ such that $|h_+(I_n(x))|\geq f_k(2^{-n})$, we have
\[|h_+(I_{j^\ell}(x))|\geq|h_+(I_n(x))|\geq f_k(2^{-n})\geq f_k(2^{-(j+1)^\ell})\sim f_k(2^{-j^\ell}).\]
Hence, the set $\cup_{n^\ell\geq N}E^{cf_k}_{n^\ell}$ provides a covering of $E^{f_k}$ for all $N\in\N$ and $c<1$ (we will use $c=1$ in the sequel for notational simplicity). Intersecting with $G^{f_{k-\varepsilon}}$ and using equations \eqref{eq:hausdorff}, \eqref{eq:bound_seneta_heyde} as well as $\frac{\ell\theta}{2}>1$, we get
\begin{align*}
\cH_\alpha\left(h_+(E^{f_k}\cap G^{f_{k-\varepsilon}})\right)
&\leq\sum_{n\in\N}\sum_{I\in\cD_{n^\ell}}|h_+(I)|^\alpha\ind_{\{f_k(|I|)<h_+(I)\leq f_{k-\varepsilon}(|I|)\}}\\
&\leq\sum_{n\in\N\setminus\{0\}}n^{-\ell\theta}\sum_{I\in\cD_{n^\ell}}\left(n^{\ell/2}|h_+(I)|\right)^\beta<\infty.
\end{align*}
This shows that $\dim h_+(E^{f_k}\cap G^{f_{k-\varepsilon}})\leq\alpha$ almost surely. 

Note that the above argument can also be applied to show that $\dim h_+(E^{f_{k'}}\cap G^{f_{k'-\varepsilon}})\leq\alpha$ for all $k'\leq k$ (with the value of $\varepsilon$ and $\delta$ independent of $k'$). Hence, we get $\dim h_+(E^{f_k})\leq\alpha$ as a finite union of sets of the form $h_+(E^{f_{k-j\varepsilon}}\cap G^{f_{k-(j+1)\varepsilon}})$, $j$ integer, all of which of dimension less than or equal to $\alpha$. This concludes the proof since $\alpha$ can be taken arbitrarily close to $1-\frac{1}{2k}$.
\end{proof}

		\subsection{Properties of the inverse homeomorphism}\label{subsubsec:inverse_homeo}
We turn to the properties of $h_-^{-1}$. We start with an elementary bound on its H\"older regularity.
\begin{lemma}\label{lem:holder_inverse_homeo}
Almost surely, for all $\alpha<\frac{1}{4}$, $h_-^{-1}$ is $\alpha$-H\"older continuous. 

In particular, for all $E\subset\cI$, $\dim E<\frac{1}{4}$ implies $|h_-^{-1}(E)|=0$.
\end{lemma}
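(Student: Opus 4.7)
The strategy is to prove a polynomial lower bound on $\mu_-(I)$ for small intervals and then invert it to obtain Hölder regularity of $h_-^{-1}$. Specifically, I will show that for every $\zeta>4$, almost surely there is a (random) constant $c>0$ such that $\mu_-(I)\geq c|I|^\zeta$ for every interval $I\subset\cI$. Granting this, if $y_1<y_2$ and $x_i:=h_-^{-1}(y_i)$, then
\[y_2-y_1=\frac{\mu_-[x_1,x_2]}{\mu_-(\cI)}\geq\frac{c}{\mu_-(\cI)}(x_2-x_1)^\zeta,\]
so $h_-^{-1}$ is $(1/\zeta)$-Hölder continuous, hence $\alpha$-Hölder for every $\alpha<1/4$.

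To establish the lower bound, I would exploit the exact scale invariance \eqref{eq:exact_scale}: for any dyadic interval $I$,
\[\mu_-(I)\laweq|I|\,e^{X_I-\frac{1}{2}\E[X_I^2]}\,M,\]
where $X_I\sim\cN(0,2\log(1/|I|))$ is independent of $M$ and $M$ has the same law as $\mu_-(\cI)$. Since all negative moments of $\mu_-(\cI)$ are finite (as recalled in Section \ref{subsec:gmc}), a direct Gaussian Laplace computation gives, for every $p>0$,
\[\E[\mu_-(I)^{-p}]\leq C_p\,|I|^{-p(p+2)}.\]
Markov's inequality yields $\P(\mu_-(I)\leq|I|^\zeta)\leq C_p|I|^{p\zeta-p(p+2)}$, and summing over the $2^n$ intervals of $\cD_n$ produces a geometrically convergent series provided $p\zeta-p(p+2)>1$, i.e.\ $\zeta>(p+1)^2/p$. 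The right-hand side is minimised at $p=1$, giving the threshold $\zeta>4$. Borel--Cantelli then provides, almost surely, a random $N$ with $\mu_-(J)\geq|J|^\zeta$ for every $J\in\cD_n$ with $n\geq N$. For a generic interval $I$ of length $\delta$: if $\delta<2^{-N-1}$, then $I$ contains a dyadic interval of length comparable to $\delta$ and one gets $\mu_-(I)\geq c'\delta^\zeta$ directly; if $\delta\geq 2^{-N-1}$, one bounds from below by the strictly positive minimum of $\mu_-$ over $\cD_{N+1}$. Combining gives the uniform bound.

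For the second assertion, suppose $\dim E<1/4$ and pick $\alpha\in(\dim E,1/4)$. The $\alpha$-Hölder regularity of $h_-^{-1}$ then yields $\dim h_-^{-1}(E)\leq\alpha^{-1}\dim E<1$, so $h_-^{-1}(E)$ has zero one-dimensional Hausdorff, hence Lebesgue, measure.

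I do not anticipate a serious obstacle: the negative-moment estimate via cascade-type scale invariance is classical, and the dyadic-to-arbitrary-interval passage is elementary. The sharp exponent $1/4$ emerges directly from optimising $(p+1)^2/p$ at $p=1$; any attempt to push beyond $1/4$ would require genuinely new information about the lower tail of $\mu_-$, and indeed this exponent is consistent with the multifractal picture for critical GMC.
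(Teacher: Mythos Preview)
Your proposal is correct and follows essentially the same approach as the paper: both use exact scale invariance and finiteness of negative moments to get $\E[\mu_-(I)^{-p}]\leq C|I|^{-p(p+2)}$, then Markov plus Borel--Cantelli over dyadics to deduce $\mu_-(I)\geq c|I|^\zeta$ for $\zeta>4$, whence $h_-^{-1}$ is $1/\zeta$-H\"older. The paper simply picks $p=1$ without displaying the optimisation $\min_p (p+1)^2/p=4$, and is terser about the passage from dyadic to arbitrary intervals and the dimension consequence, but there is no substantive difference.
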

\begin{proof}
 For every $\alpha,p>0$ and intervals $I\subset\cI$, we have by Markov's inequality and the exact scale invariance property \eqref{eq:exact_scale}:
\begin{align*}
\P\left(\mu_-(I)\leq|I|^\alpha\right)
=\P\left(\mu_-(I)^{-p}\geq|I|^{-\alpha p}\right)
\leq\E\left[\mu_-(I)^{-p}\right]|I|^{\alpha p}\leq C|I|^{(\alpha-2)p-p^2}.
\end{align*}
Hence, for $\alpha>4$ and $p=1$, we get $\P(\mu_-(I)\leq|I|^\alpha)\leq C|I|^{\alpha-3}=C|I|^{1+(\alpha-4)}$. Specialising to dyadic intervals, the Borel-Cantelli lemma implies that $|h_-(I)|\geq C|I|^\alpha$ for every arc $I\subset\cI$ and some a.s. finite constant $C>0$, i.e. $h_-^{-1}$ is a.s. $\alpha^{-1}$-H\"older continuous.
\end{proof}

Now we investigate the multifractal properties of $h_-^{-1}$ in more detail. Lemma \ref{lem:support_inverse_homeo} below shows that $h_-^{-1}$ transforms a set of Hausdorff dimension $\frac{1}{2}$ into a set of full Lebesgue measure. Intuitively, this can be deduced from the multifractal analysis of $h_-$ as follows. Let 
\[\tilde{E}_\delta:=\left\lbrace x\in\cI:\,\underset{n\to\infty}{\liminf}\frac{\log|h_-^{-1}(I_n(x))|}{\log|I_n(x)|}=\delta\right\rbrace\]
 and $E_\delta$ the analogous set defined for $h_-$ instead of $h_-^{-1}$. Then we expect to have $h_-^{-1}(\tilde{E}_\delta)=E_{1/\delta}$ and $\dim\tilde{E}_\delta=\delta\dim E_{1/\delta}$. To our knowledge, the multifractal analysis of the critical measure has never been written down explicitly, but we can expect $\dim E_\delta=\delta-\frac{\delta^2}{4}$ (hence $\dim\tilde{E}_\delta=1-\frac{1}{4\delta}$) based on known facts from the subcritical case \cite[Section 4]{rhodes2014_gmcReview}. For our purposes, it will be sufficient to give an upper-bound on these dimensions. Notice that these values also explain the H\"older exponent of $h_-^{-1}$ found in Lemma \ref{lem:holder_inverse_homeo}: $E_\delta=\emptyset$ for $\delta>4$, and the local H\"older exponent of $h_-^{-1}$ should be bounded by $\frac{1}{\delta}$ on $h_-(E_\delta)$.

We look for a set which $h_-^{-1}$ maps to a set of full Lebesgue measure, i.e. we look for $\delta$ such that $\delta=\dim\tilde{E}_\delta$. That is, $0=\delta^2-\delta+\frac{1}{4}=(\delta-\frac{1}{2})^2$, i.e. $\delta=\frac{1}{2}$. Precisely, we have:
\begin{lemma}\label{lem:support_inverse_homeo}
Almost surely, $\dim\tilde{E}_{1/2}\leq\frac{1}{2}$ and $|h_-^{-1}(\cI\setminus \tilde{E}_{1/2})|=0$.
\end{lemma}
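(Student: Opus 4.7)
The proof has two independent parts. For the dimension bound, a purely combinatorial covering argument suffices: if $y\in\tilde{E}_{1/2}$, then for any $\delta>0$ fixed, $|h_-^{-1}(I_n(y))|\leq 2^{-n(1/2-\delta)}$ for all sufficiently large $n$. Since the preimages of distinct intervals in $\cD_n$ are pairwise disjoint subintervals of $\cI$, at most $2^{n(1/2-\delta)}$ of them can satisfy this bound. The set of $y$ for which the bound holds for all $n\geq N$ can thus be covered by at most $2^{n(1/2-\delta)}$ intervals of length $2^{-n}$ for every $n\geq N$, which gives $\cH_\alpha(\tilde{E}_{1/2})=0$ for every $\alpha>1/2-\delta$. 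Letting $\delta\to 0$ yields $\dim\tilde{E}_{1/2}\leq 1/2$.

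For the Lebesgue statement, the plan is to show that for Lebesgue-a.e. $x\in\cI$ one has $h_-(x)\in\tilde{E}_{1/2}$. The key input is that the logarithmic scaling of $\mu_-$ at $x$ is typically equal to $2$:
\[
\frac{\log\mu_-(I_n(x))}{\log|I_n(x)|}\xrightarrow[n\to\infty]{}2\quad\text{for Lebesgue-a.e. }x.
\]
The exact scale invariance \eqref{eq:exact_scale} gives $\mu_-(I_n(x))\laweq 2^{-2n}e^{X_n}\mu_-^I$ with $X_n\sim\cN(0,2n\log 2)$ and $\mu_-^I$ an independent copy of $\mu_-(\cI)$. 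For fixed $\delta>0$, the deviation event $\{|X_n+\log\mu_-^I|>n\delta\log 2\}$ has summable probability in $n$: the Gaussian tail of $X_n$ contributes $e^{-cn\delta^2}$, and the tails of $\log\mu_-^I$ are controlled by the existence of positive moments of $\mu_-(\cI)$ of every order $p<1$ (upper tail) and all negative moments (lower tail), both recalled in Section~\ref{subsec:gmc}. Applying Borel-Cantelli pointwise and then Fubini against Lebesgue measure in $x$, the stated convergence holds almost surely on a set of full Lebesgue measure.

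Given this scaling at $x$, it remains to transfer it to $L_n:=|h_-^{-1}(I_n(h_-(x)))|$, the length of the preimage interval, which has $\mu_-$-mass equal to $\mu_-(\cI)\cdot 2^{-n}$ and contains $x$. If $L_n>2\cdot 2^{-m}$, then $I_m(x)$ is contained in $h_-^{-1}(I_n(h_-(x)))$, and the lower bound $\mu_-(I_m(x))\geq 2^{-m(2+\delta)}$ forces $m\geq(n-O(1))/(2+\delta)$, hence $L_n\leq 2^{-n/(2+\delta)+o(n)}$. Conversely, if $L_n\leq 2^{-m}$ then the preimage is contained in the union of $I_{m-1}(x)$ and the adjacent dyadic interval; applying the upper scaling $\mu_-(I)\leq|I|^{2-\delta}$ to both (which requires running the Fubini argument once more against shifted dyadic intervals, which causes no difficulty) gives $m\leq(n+O(1))/(2-\delta)$ and $L_n\geq 2^{-n/(2-\delta)-o(n)}$. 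Letting $\delta\to 0$ yields $\log L_n/\log 2^{-n}\to 1/2$, so $h_-(x)\in\tilde{E}_{1/2}$. The main technical subtlety lies precisely in this last step: the preimage interval $h_-^{-1}(I_n(h_-(x)))$ is not dyadic and may be eccentric with respect to $x$, which forces the control of $\mu_-$ on dyadic intervals adjacent to $I_n(x)$ in addition to $I_n(x)$ itself.
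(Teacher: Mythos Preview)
Your first paragraph contains a genuine error. The implication you draw from $y\in\tilde{E}_{1/2}$ is correct: since $\liminf_n\frac{\log|h_-^{-1}(I_n(y))|}{\log|I_n(y)|}=\tfrac12$, one does have $|h_-^{-1}(I_n(y))|\leq 2^{-n(1/2-\delta)}$ for all large $n$. But the pigeonhole step is backwards. Disjointness of the preimages gives $\#\{I\in\cD_n:|h_-^{-1}(I)|\geq 2^{-n(1/2-\delta)}\}\leq 2^{n(1/2-\delta)}$; it says nothing about how many intervals can have \emph{small} preimage, and indeed all $2^n$ of them could. The correct route (and the one the paper takes) uses the other half of the liminf: $\liminf\leq\tfrac12$ forces $|h_-^{-1}(I_n(y))|\geq|I_n(y)|^{1/2+\varepsilon}$ for infinitely many $n$, so $\tilde{E}_{1/2}\subset\limsup_n\{I\in\cD_n:|h_-^{-1}(I)|\geq|I|^{1/2+\varepsilon}\}$, and now the pigeonhole count $\leq 2^{n(1/2+\varepsilon)}$ applies to give $\dim\tilde{E}_{1/2}\leq\tfrac12+\varepsilon$. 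Your covering by a \emph{liminf} set (``holds for all $n\geq N$'') cannot work here because membership in $\tilde{E}_{1/2}$ does not produce a large-preimage condition for all large $n$.

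Your second part is essentially correct and genuinely different from the paper. You establish directly that Lebesgue-a.e.\ $x$ has $\frac{\log\mu_-(I_n(x))}{\log|I_n(x)|}\to 2$ (Borel--Cantelli from scale invariance plus Gaussian and moment tails, then Fubini), and then translate this into $\frac{\log L_n}{\log 2^{-n}}\to\tfrac12$ by sandwiching the non-dyadic preimage between dyadic intervals. The paper instead proves the multifractal upper bound $\dim E_\delta\leq\delta-\tfrac{\delta^2}{4}$ for $h_-$ via a first-moment computation, and then transfers this to $h_-^{-1}$ through a change-of-scale observation. Your approach is more elementary and tailored to the single exponent $\tfrac12$; the paper's yields the whole spectrum but requires the moment estimate $\E[\mu_-(I)^\eta]\leq C|I|^{2\eta-\eta^2}$. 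Your acknowledged subtlety about adjacent dyadic intervals is real but harmless: running the same Borel--Cantelli/Fubini for the two neighbours $I_m(x)^\pm$ (a finite number of extra sequences) suffices for both sandwich inequalities, since if $|J_n|>4\cdot 2^{-m}$ then $J_n$ contains one of $I_m(x)^\pm$, and if $|J_n|\leq 2^{-m}$ then $J_n\subset I_m(x)\cup I_m(x)^\pm$.
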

\begin{proof}
We denote $E_\delta^{\geq}:=\cup_{\delta'\geq\delta}E_{\delta'}$, and their obvious generalisations $E_\delta^{\leq},\tilde{E}_\delta^{\geq},\tilde{E}_\delta^{\leq}$.

For the first claim, note that for all $\varepsilon>0$ and $n\in\N$, the fact that $|h_-^{-1}(\cI)|=1$ implies $\#\{I\in\cD_n,\,|h_-^{-1}(I)|\geq|I|^{1/2+\varepsilon}\}\leq|I|^{-1/2-\varepsilon}$. Thus, for all $\alpha>\frac{1}{2}+\varepsilon$, we have
\begin{align*}
\cH_\alpha(\tilde{E}^{\leq}_{1/2})
&\leq\sum_{n\in\N}\sum_{I\in\cD_n}|I|^\alpha\ind_{\{|h_-^{-1}(I)|\geq|I|^{1/2+\varepsilon}\}}\leq\sum_{n\in\N}|I|^{\alpha-1/2-\varepsilon}<\infty.
\end{align*}
This implies $\dim\tilde{E}_{1/2}\leq\dim\tilde{E}_{1/2}^{\leq}\leq\frac{1}{2}+\varepsilon$, from which the claim follows since $\varepsilon>0$ was arbitrary.

For the second claim, we start with the following observation. Suppose $\tilde{I}\in\cD_n$ is such that $|h_-^{-1}(\tilde{I})|\leq|\tilde{I}|^\delta$. Then there exists $I\in\cD_{\lfloor\delta n\rfloor-1}$ such that $|h_-(I)|\geq|\tilde{I}|\geq(\frac{1}{4}|I|)^{1/\delta}$.  Similarly, if $|h_-^{-1}(\tilde{I})|\geq|\tilde{I}|^\delta$, there exists $I\in\cD_{\lceil\delta n\rceil+1}$ such that $|h_-(I)|\leq|\tilde{I}|\leq(4|I|)^{1/\delta}$. Note that $h_-(I)$ does not cover $\tilde{I}$, but we can simply add the two dyadic intervals in $\cD_n$ directly to the right and to the left of $h_-(I)$. This just has the effect of multiplying everything by a global constant.


Now we get an upper-bound on $\dim E_\delta$. Fix $\delta\in(0,2)$ and set $\eta:=1-\frac{\delta}{2}\in(0,1)$. Let $\alpha>\delta-\frac{\delta^2}{4}$ and $\varepsilon\in(0,2-\delta)$. For all $n\in\N$, we have using exact scale invariance \eqref{eq:exact_scale}:
\begin{align*}
\E\left[\sum_{I\in\cD_n}|I|^\alpha\ind_{\{|I|^{\delta+\varepsilon}\leq|h_-(I)|\leq|I|^{\delta+\varepsilon}\}}\right]
&\leq\E\left[\sum_{I\in\cD_n}|I|^{\alpha}\left(\frac{\mu_-(I)}{|I|^{\delta+\varepsilon}}\right)^\eta\right]\\
&=|I|^{\alpha-(\delta+\varepsilon)\eta-1}\E\left[\mu_-(I)^\eta\right]\\
&\leq C|I|^{\alpha-(\delta+\varepsilon)\eta-1}\times|I|^{2\eta-\eta^2}\\
&\leq C|I|^{\alpha-\varepsilon\eta-(\delta-\delta^2/4)}.
\end{align*}
Summing over $n\in\N$ and taking $\varepsilon>0$ arbitrarily small, we see that $\E[\cH_\alpha(E_\delta)]<\infty$ for all $\alpha>\delta-\frac{\delta^2}{4}$, hence $\dim E_\delta\leq\delta-\frac{\delta^2}{4}$. Moreover, we can write $E_\delta^{\leq}$ as a countable union of sets of zero $\alpha$-Hausdorff measure, hence $\dim E_\delta^{\leq}\leq\delta-\frac{\delta^2}{4}$. A similar argument shows that $\dim E_\delta^{\geq}\leq\delta-\frac{\delta^2}{4}$ for all $\delta\in(2,4)$.

Going back to $h_-^{-1}(\tilde{E}_{1/2})$, let $\varepsilon>0$, $\delta\in(\frac{1}{2},\frac{1}{2}+\varepsilon)$ and $\alpha\in(\frac{1}{\delta}-\frac{1}{4\delta^2},1)$. Using our previous observation, we have

\begin{align*}
\cH_\alpha\left(h_-^{-1}\left(\tilde{E}_{\frac{1}{2}+\varepsilon}^{\geq}\right)\right)
&\leq\sum_{n\in\N}\sum_{\tilde{I}\in\cD_n}|h_-^{-1}(\tilde{I})|^\alpha\ind_{\{|h_-^{-1}(\tilde{I})|\leq|\tilde{I}|^{\delta}\}}\\
&\leq C\sum_{n\in\N}\sum_{I\in\cD_{\lfloor\delta n\rfloor-1}}|I|^\alpha\ind_{\{|h_-(I)|\geq(\frac{1}{4}|I|)^{1/\delta}\}}\\
&\leq C\sum_{n\in\N}\sum_{I\in\cD_n}|I|^\alpha\ind_{\{|h_-(I)|\geq|I|^{1/\delta}\}}.
\end{align*}
Here, $C>0$ is a generic constant that may change from one line to the other. By the above, this last quantity is a.s. finite for our choice of $\alpha$, implying $\cH_1(h_-^{-1}(\tilde{E}_{1/2+\varepsilon}^{\geq}))=0$ almost surely. A similar computation shows that $\cH_1(h_-^{-1}(\tilde{E}_{1/2-\varepsilon}^{\leq}))=0$. Taking a sequence $\varepsilon_n\to0$, we get $|h_-^{-1}(\cI\setminus\tilde{E}_{1/2})|=0$ as a countable union of sets of zero Lebesgue measure.
\end{proof}

		\subsection{Concluding the proof}\label{subsubsec:conclusion}
 The next and final lemma gives an upper-bound on the size of $h_+(E^{f_k})\cap\tilde{E}_{1/2}$.
\begin{lemma}\label{lem:dim_intersection}
Fix $k>\frac{1}{2}$. Almost surely, $\dim(h_+(E^{f_k})\cap\tilde{E}_{1/2})\leq\frac{1}{2}-\frac{1}{2k}$.
\end{lemma}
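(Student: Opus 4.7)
The exponent $\tfrac{1}{2} - \tfrac{1}{2k}$ is exactly $(1 - \tfrac{1}{2k}) + \tfrac{1}{2} - 1$, matching the heuristic intersection formula $\dim(A \cap B) \leq \dim A + \dim B - 1$ for independent subsets $A,B \subset \cI$. Here $A := h_+(E^{f_k})$ is of dimension at most $1 - \tfrac{1}{2k}$ by Lemma~\ref{lem:dim_Ef}, $B := \tilde{E}_{1/2}$ is of dimension at most $\tfrac{1}{2}$ by Lemma~\ref{lem:support_inverse_homeo}, and the two sets are measurable with respect to $\mu_+$ and $\mu_-$ respectively, hence independent. The plan is to implement this heuristic rigorously by averaging over the independent $\mu_-$-randomness.

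Fix $\alpha > \tfrac{1}{2} - \tfrac{1}{2k}$ and retain the notation of Lemma~\ref{lem:support_inverse_homeo}: setting $\tilde{\cC}_m := \{J \in \cD_m : |h_-^{-1}(J)| \geq |J|^{1/2+\varepsilon}\}$, one has $\tilde{E}_{1/2} \subset \limsup_m \bigcup_{J \in \tilde{\cC}_m} J$. Covering $A \cap B$ by those $J \in \tilde{\cC}_m$ with $m \geq M$ meeting $A$ yields
\[\cH_\alpha(A \cap B) \leq \lim_{M \to \infty} \sum_{m \geq M} R_m\, 2^{-m\alpha}, \qquad R_m := \#\{J \in \tilde{\cC}_m : J \cap A \neq \emptyset\}.\]
It is enough to show that $\E\bigl[\sum_m R_m 2^{-m\alpha}\bigr] < \infty$: this forces $\sum_m R_m 2^{-m\alpha} < \infty$ a.s., hence $\cH_\alpha(A \cap B) = 0$ a.s., so $\dim(A\cap B) \leq \alpha$ a.s. By Fubini and the independence of $\mu_+,\mu_-$,
\[\E[R_m] = \sum_{J \in \cD_m} \P(J \in \tilde{\cC}_m)\, \P(J \cap A \neq \emptyset) \leq \bigl(\max_J \P(J \in \tilde{\cC}_m)\bigr)\, \E[N_m^+],\]
where $N_m^+ := \#\{J \in \cD_m : J \cap A \neq \emptyset\}$.

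For the first factor, I expect the bound $O(2^{-m(1/2 - \varepsilon)})$ by combining the deterministic pigeonhole bound $\#\tilde{\cC}_m \leq 2^{m(1/2+\varepsilon)}$ from Lemma~\ref{lem:support_inverse_homeo} with the exact scale invariance~\eqref{eq:exact_scale} of $\mu_-$, which delivers location-uniformity of $\P(J \in \tilde{\cC}_m)$ in $J$. For the second factor, I apply the Seneta--Heyde estimate~\eqref{eq:bound_seneta_heyde} from the proof of Lemma~\ref{lem:dim_Ef} to the covering $\{h_+(I) : I \in \cD_{n^\ell},\, f_k(|I|) < \mu_+(I) \leq f_{k-\varepsilon}(|I|)\}$ at the matched scale $n^\ell \sim 2^{m/k}$, which gives $N_m^+ \leq C \cdot 2^{m(1 - 1/(2k) + \varepsilon)}$ almost surely. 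Multiplying these bounds gives $\E[R_m] \leq C \cdot 2^{m(1/2 - 1/(2k) + O(\varepsilon))}$, so $\sum_m 2^{-m\alpha}\E[R_m]$ is a convergent geometric series for our $\alpha$. Letting $\varepsilon \to 0$ and $\alpha \to \tfrac{1}{2} - \tfrac{1}{2k}$ concludes.

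The main obstacle is the location-uniform bound on $\P(J \in \tilde{\cC}_m)$. On the circle, rotational symmetry of the GFF (via the M\"obius-invariance reduction of Section~\ref{subsec:applications}) gives exact uniformity, so that the pigeonhole bound on $\#\tilde{\cC}_m$ immediately transfers to a pointwise bound on $\P(J \in \tilde{\cC}_m)$. On the interval $\cI$, only approximate uniformity is available because of the boundary behaviour of the free-boundary GFF, and the required estimate is to be derived either by reducing to the circle as in Section~\ref{subsec:applications}, or directly by combining~\eqref{eq:exact_scale} with left-tail estimates for the $\mu_-$-mass of dyadic sub-intervals of $J$ of length $|J|^{1/2+\varepsilon}$.
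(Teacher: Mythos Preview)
Your strategy is the paper's: exploit the independence of $\mu_\pm$, condition on $h_+$, and average the $\tilde{E}_{1/2}$-indicator over the $h_-$-randomness to gain a factor of order $|J|^{1/2-\eta}$. You are in fact more explicit than the paper about the one genuine technical point --- the location-uniform bound $\P(|h_-^{-1}(J)| \geq |J|^{1/2+\eta}) \leq C|J|^{1/2-\eta}$ --- which the paper simply asserts from the pigeonhole identity $\sum_{J \in \cD_n} \P(J \in \tilde{\cC}_n) = \E[M_n]$ together with an implicit symmetry, and then applies to the non-dyadic interval $h_+(I)$.

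Where your implementation departs from the paper is the choice of covering, and this introduces a gap. By covering $A \cap B$ with dyadic $J$'s you are forced to control the box-counting number $N_m^+ = \#\{J \in \cD_m : J \cap h_+(E^{f_k}) \neq \emptyset\}$. Your claimed bound $N_m^+ \leq C\,2^{m(1-1/(2k)+\varepsilon)}$ via the Seneta--Heyde estimate ``at the matched scale $n^\ell \sim 2^{m/k}$'' does not follow: $E^{f_k} = \limsup_n E^{f_k}_n$ is only covered by $\bigcup_{n^\ell \geq N} E^{f_k}_{n^\ell}$, so at any fixed dyadic scale $m$ one must account for contributions from \emph{all} large $n$, not just the matched one, and the naive count of intervals coming from scales $n^\ell \gg 2^{m/k}$ diverges. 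Lemma~\ref{lem:dim_Ef} delivers a Hausdorff dimension bound, which for $\limsup$ sets does not upgrade to an upper box-counting bound. The paper sidesteps this by covering $h_+(E^{f_k}) \cap \tilde{E}_{1/2}$ with the intervals $h_+(I)$ themselves, $I \in \cD_{n^\ell}$, carrying both indicators $\ind_{\{f_k(|I|)<|h_+(I)|\leq f_{k-\varepsilon}(|I|)\}}$ and $\ind_{\{|h_-^{-1}(h_+(I))| \geq |h_+(I)|^{1/2+\eta}\}}$. After conditioning on $h_+$ and averaging over $h_-$, the second indicator contributes $C|h_+(I)|^{1/2-\eta}$, leaving exactly the sum $\sum_n \sum_{I \in \cD_{n^\ell}} |h_+(I)|^{\alpha+1/2-\eta}\ind_{\{\cdots\}}$, whose a.s.\ finiteness for $\alpha + \tfrac{1}{2} - \eta > 1 - \tfrac{1}{2k}$ is precisely what the proof of Lemma~\ref{lem:dim_Ef} established. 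Switching to this covering repairs your argument without further input.
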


\begin{proof}
Let $\alpha,\eta>0$ and fix $\varepsilon,\delta>0$ as in the proof of Lemma \ref{lem:dim_Ef} and recall that for all $n\in\N$ we have $M_n:=\#\{I\in\cD_n,\,|h_-^{-1}(I)|\geq|I|^{1/2+\eta}\}\leq|I|^{-1/2-\eta}$. Thus, for all $I\in\cD_n$, we have $\P(|h_-^{-1}(I)|\geq|I|^{1/2+\eta})=\frac{1}{\#\cD_n}\E[M_n]\leq|I|^{1/2-\eta}$. Using this estimate and the fact that $h_+$ is independent of $h_-$, we can condition on $h_+$ to get:
\begin{align*}
&\E\left[\left.\sum_{I\in\cD_n}|h_+(I)|^\alpha\ind_{\{f_k(|I|)<|h_+(I)|\leq f_{k-\varepsilon}(|I|)\}}\ind_{\{|h(I)|\geq|h_+(I)|^{1/2+\eta}\}}\right|\,h_+\right]\\
&\qquad=\sum_{I\in\cD_n}|h_+(I)|^\alpha\ind_{\{f_k(|I|)<|h_+(I)|\leq f_{k-\varepsilon}(|I|)\}}\P\left(\left.|h_-^{-1}(h_+(I))|\geq|h_+(I)|^{1/2+\eta}\right|\,h_+\right)\\
&\qquad\leq C\sum_{I\in\cD_n}|h_+(I)|^{\alpha+\frac{1}{2}-\eta}\ind_{\{f_k(|I|)<|h_+(I)|\leq f_{k-\varepsilon}(|I|)\}}.
\end{align*}

Suppose $\alpha>\dim\,(h_+(E^{f_k}))-\frac{1}{2}+\eta$. From the proof of Lemma \ref{lem:dim_Ef}, there a.s. exists an integer $\ell$ such that 
\begin{equation}\label{eq:bound_hausdorff_intersection}
\sum_{n\in\N}\sum_{I\in\cD_{n^\ell}}|h_+(I)|^{\alpha+\frac{1}{2}-\eta}\ind_{\{f_k(|I|)<|h_+(I)|\leq f_{k-\varepsilon}(|I|)\}}<\infty.
\end{equation}
Moreover, we can cover $h_+(E^{f_k})\cap\tilde{E}_{1/2}$ with the union over $n\in\N$ of those $I\in\cD_{n^\ell}$ such that $|h_+(I)|\in[f_{k-\varepsilon}(|I|),f_k(|I|))$ and $|h(I)|\geq|h_+(I)|^{1/2+\eta}$. We deduce that, almost surely, $\E[\cH_\alpha(h_+(E^{f_k})\cap\tilde{E}_{1/2})|h_+]$ is bounded above by \eqref{eq:bound_hausdorff_intersection}, hence a.s. $\dim\,(h_+(E^{f_k})\cap\tilde{E}_{1/2})\leq\alpha$. Taking $\eta$ arbitrarily close to $0$ enables to take $\alpha$ arbitrarily close to $\dim\,(h_+(E^{f_k}))-\frac{1}{2}$, so that by Lemma \ref{lem:dim_Ef}:
\[\dim\,(h_+(E^{f_k})\cap\tilde{E}_{1/2})\leq\dim h_+(E^{f_k})-\frac{1}{2}\leq\frac{1}{2}-\frac{1}{2k}.\]
%
%
\end{proof}

We now have all the necessary ingredients to conclude the proof of Theorem \ref{thm:log_regular}. 

Fix $k>4$. By definition, the local H\"older regularity of $h_-^{-1}$ on $\tilde{E}_{1/2}$ is $\frac{1}{2}$, so if $F\subset\tilde{E}_{1/2}$, we have $\dim h_-^{-1}(F)\leq 2\dim F$. Hence, Lemma \ref{lem:dim_intersection} implies 
\[\dim\,(h_-^{-1}(h_+(E^{f_k})\cap\tilde{E}_{1/2}))\leq 2\dim(h_+(E^{f_k})\cap\tilde{E}_{1/2})\leq1-\frac{1}{k}<1.\] 
Moreover, by Lemma \ref{lem:support_inverse_homeo}: 
\[|h_-^{-1}(h_+(E^{f_k})\cap(\cI\setminus\tilde{E}_{1/2}))|\leq|h^{-1}_-(\cI\setminus\tilde{E}_{1/2})|=0.\]
This proves $|h(E^{f_k})|=0$ almost surely, so we need only focus on $\cI\setminus E^{f_k}$.

Let $F\subset\cI\setminus E^{f_k}$ and $\nu$ be a Borel probability measure giving full mass to $h_+(F)$. The pullback measure $h_+^*\nu$ gives full mass to $F$ and since we are on $\cI\setminus E^{f_k}$, for all $k'\in(4,k)$ there is $C>0$ such that
\[\int\log\frac{1}{|x-y|}\d h_+^*\nu(x)\d h^*_+\nu(y)\leq C\int|x-y|^{-1/k'}\d\nu(x)\d\nu(y).\]
 That is, we can bound the $\log$-energy of $h_+^*\nu$ by the $\frac{1}{k'}$-energy of $\nu$. By Frostman's lemma, if $\dim h_+(F)>\frac{1}{k'}$, there exists $\nu$ as above with finite $\frac{1}{k'}$-energy, hence $h_+^*\nu$ has finite $\log$-energy and $F$ is not polar by \eqref{eq:log_energy}. Thus, for every polar set $F\subset\cI\setminus E^{f_k}$, we have $\dim\,h_+(F)\leq\frac{1}{k}$, which further implies $\dim h(F)\leq\frac{4}{k}<1$ by Lemma \ref{lem:holder_inverse_homeo} and our assumption on $k$. Hence $|h(F)|=0$ a.s. for all $F\subset\cI$ polar. By symmetry, this also holds for $h^{-1}$, i.e. $h$ is $\log$-regular.

\bibliographystyle{alpha}
\bibliography{../sle}

\newcommand{\etalchar}[1]{$^{#1}$}
\begin{thebibliography}{DRSV14b}

\bibitem[AJKS11]{astala2011}
Kari Astala, Peter Jones, Antti Kupiainen, and Eero Saksman.
\newblock Random conformal weldings.
\newblock {\em Acta Math.}, 207(2):203--254, 2011.

\bibitem[APS19]{APS19}
Juhan Aru, Ellen Powell, and Avelio Sep\'{u}lveda.
\newblock Critical {L}iouville measure as a limit of subcritical measures.
\newblock {\em Electron. Commun. Probab.}, 24:Paper No. 18, 16, 2019.

\bibitem[Bef08]{Beffara08}
Vincent Beffara.
\newblock The dimension of the {SLE} curves.
\newblock {\em Ann. Probab.}, 36(4):1421--1452, 2008.

\bibitem[Ber16]{Berestycki_lqggff}
Nathana\"el Berestycki.
\newblock Introduction to the {G}aussian {F}ree {F}ield and {L}iouville
  {Q}uantum {G}ravity.
\newblock {\em Notes available on the webpage of the author}, 2016.

\bibitem[Ber17]{Berestycki17}
Nathana\"{e}l Berestycki.
\newblock An elementary approach to {G}aussian multiplicative chaos.
\newblock {\em Electron. Commun. Probab.}, 22:Paper No. 27, 12, 2017.

\bibitem[Bis07]{Bishop}
Christopher~J. Bishop.
\newblock Conformal welding and {K}oebe's theorem.
\newblock {\em Ann. of Math. (2)}, 166(3):613--656, 2007.

\bibitem[BKN{\etalchar{+}}14]{Barral14}
Julien Barral, Antti Kupiainen, Miika Nikula, Eero Saksman, and Christian Webb.
\newblock Critical {M}andelbrot cascades.
\newblock {\em Comm. Math. Phys.}, 325(2):685--711, 2014.

\bibitem[BKN{\etalchar{+}}15]{Barral15}
Julien Barral, Antti Kupiainen, Miika Nikula, Eero Saksman, and Christian Webb.
\newblock Basic properties of critical lognormal multiplicative chaos.
\newblock {\em Ann. Probab.}, 43(5):2205--2249, 2015.

\bibitem[DMS14]{MatingOfTrees}
Bertrand {Duplantier}, Jason {Miller}, and Scott {Sheffield}.
\newblock {Liouville quantum gravity as a mating of trees}.
\newblock {\em arXiv e-prints}, page
  \href{https://arxiv.org/abs/1409.7055}{arXiv:1409.7055}, Sep 2014.

\bibitem[DRSV14a]{DRSV14_mart}
Bertrand Duplantier, R\'{e}mi Rhodes, Scott Sheffield, and Vincent Vargas.
\newblock Critical {G}aussian multiplicative chaos: convergence of the
  derivative martingale.
\newblock {\em Ann. Probab.}, 42(5):1769--1808, 2014.

\bibitem[DRSV14b]{DRSV14}
Bertrand Duplantier, R\'{e}mi Rhodes, Scott Sheffield, and Vincent Vargas.
\newblock Renormalization of critical {G}aussian multiplicative chaos and {KPZ}
  relation.
\newblock {\em Comm. Math. Phys.}, 330(1):283--330, 2014.

\bibitem[DS11]{DuplantierSheffield11}
Bertrand Duplantier and Scott Sheffield.
\newblock Liouville quantum gravity and {KPZ}.
\newblock {\em Invent. Math.}, 185(2):333--393, 2011.

\bibitem[FOT11]{Fukushima10}
Masatoshi Fukushima, Yoichi Oshima, and Masayoshi Takeda.
\newblock {\em Dirichlet forms and symmetric {M}arkov processes}, volume~19 of
  {\em De Gruyter Studies in Mathematics}.
\newblock Walter de Gruyter \& Co., Berlin, extended edition, 2011.

\bibitem[GMS18]{Gwynne18_spectrumSLE}
Ewain Gwynne, Jason Miller, and Xin Sun.
\newblock Almost sure multifractal spectrum of {S}chramm-{L}oewner evolution.
\newblock {\em Duke Math. J.}, 167(6):1099--1237, 2018.

\bibitem[HP18]{HoldenPowell2018}
Nina {Holden} and Ellen {Powell}.
\newblock {Conformal welding for critical Liouville quantum gravity}.
\newblock {\em arXiv e-prints}, page
  \href{https://arxiv.org/abs/1812.11808}{arXiv:1812.11808}, Dec 2018.

\bibitem[Jon95]{Jones91}
Peter Jones.
\newblock On removable sets for {S}obolev spaces in the plane.
\newblock In {\em Essays on Fourier Analysis in Honor of Elias M. Stein
  (PMS-42)}, pages 250--267. Princeton University Press, 1995.

\bibitem[JS00]{JonesSmirnov00}
Peter~W. Jones and Stanislav~K. Smirnov.
\newblock Removability theorems for {S}obolev functions and quasiconformal
  maps.
\newblock {\em Ark. Mat.}, 38(2):263--279, 2000.

\bibitem[JS17]{JunnilaSkasman17_uniqueness}
Janne Junnila and Eero Saksman.
\newblock Uniqueness of critical {G}aussian chaos.
\newblock {\em Electron. J. Probab.}, 22:Paper No. 11, 31, 2017.

\bibitem[Kah85]{Kahane85}
Jean-Pierre Kahane.
\newblock Sur le chaos multiplicatif.
\newblock {\em Ann. Sci. Math. Qu\'{e}bec}, 9(2):105--150, 1985.

\bibitem[MMQ19]{McenteggartMillerQian}
Oliver McEnteggart, Jason Miller, and Wei Qian.
\newblock {Uniqueness of the welding problem for SLE and Liouville quantum
  gravity}.
\newblock {\em Journal of the Institute of Mathematics of Jussieu}, page
  1–27, 2019.

\bibitem[Pow18]{Powell18_critical}
Ellen Powell.
\newblock Critical {G}aussian chaos: convergence and uniqueness in the
  derivative normalisation.
\newblock {\em Electron. J. Probab.}, 23:Paper No. 31, 26, 2018.

\bibitem[RS05]{RohdeSchramm05}
Steffen Rohde and Oded Schramm.
\newblock Basic properties of {SLE}.
\newblock {\em Ann. of Math. (2)}, 161(2):883--924, 2005.

\bibitem[RV14]{rhodes2014_gmcReview}
R\'{e}mi Rhodes and Vincent Vargas.
\newblock Gaussian multiplicative chaos and applications: a review.
\newblock {\em Probab. Surv.}, 11:315--392, 2014.

\bibitem[RV15]{RhodesVargas15_criticalLBM}
R\'{e}mi Rhodes and Vincent Vargas.
\newblock Liouville {B}rownian motion at criticality.
\newblock {\em Potential Anal.}, 43(2):149--197, 2015.

\bibitem[Sch00]{Schramm2000}
Oded Schramm.
\newblock Scaling limits of loop-erased random walks and uniform spanning
  trees.
\newblock {\em Israel J. Math.}, 118:221--288, 2000.

\bibitem[She16]{sheffield2016}
Scott Sheffield.
\newblock Conformal weldings of random surfaces: {SLE} and the quantum gravity
  zipper.
\newblock {\em Ann. Probab.}, 44(5):3474--3545, 2016.

\bibitem[You18]{Younsi18}
Malik Younsi.
\newblock Removability and non-injectivity of conformal welding.
\newblock {\em Ann. Acad. Sci. Fenn. Math.}, 43(1):463--473, 2018.

\end{thebibliography}
\end{document}